\title{Диофантовы экспоненты решёток и рост многомерных аналогов неполных частных
       \thanks{Исследование выполнено за счёт гранта Российского научного фонда № 22-21-00079, https://rscf.ru/project/22-21-00079/}}
\author{Э.\,Р.\,Бигушев, О.\,Н.\,Герман}
\date{}
\theoremstyle{definition}
\newtheorem{definition}{Определение}
\newtheorem*{notation*}{Обозначение}
\theoremstyle{remark}
\newtheorem*{remark*}{Замечание}
\theoremstyle{plain}
\newtheorem{theorem}{Теорема}
\newtheorem{proposition}{Предложение}
\newtheorem*{statement*}{Утверждение}
\newtheorem*{corollary*}{Следствие}
\newtheorem{proof_m*}{Доказательство теоремы 1}
\DeclareMathOperator{\conv}{conv}
\DeclareMathOperator{\ang}{ang}
\renewcommand{\vec}[1]{\mathbf{#1}}
\renewcommand{\geq}{\geqslant}
\renewcommand{\leq}{\leqslant}
\newcommand{\e}{\varepsilon}
\newcommand{\R}{\mathbb{R}}
\newcommand{\Z}{\mathbb{Z}}
\newcommand{\N}{\mathbb{N}}
\newcommand{\La}{\Lambda}
\newcommand{\Ga}{\Gamma}
\newcommand{\cB}{\mathcal{B}}
\newcommand{\cC}{\mathcal{C}}
\newcommand{\cI}{\mathcal{I}}
\newcommand{\cK}{\mathcal{K}}
\newcommand{\cM}{\mathcal{M}}
\newcommand{\cO}{\mathcal{O}}
\newcommand{\cP}{\mathcal{P}}
\newcommand{\cS}{\mathcal{S}}
\newcommand{\cT}{\mathcal{T}}
\newcommand{\cV}{\mathcal{V}}
\newcommand{\cW}{\mathcal{W}}
\newcommand{\starv}{\textup{St}_{\vec v}}
\newcommand{\starw}{\textup{St}_{\vec w}}
\newcommand{\staru}{\textup{St}_{\vec u}}
\begin{document}

\maketitle

\begin{abstract}
  В данной работе мы изучаем трёхмерный аналог связи экспоненты иррациональности вещественного числа с ростом его неполных частных при разложении в обыкновенную цепную дробь. В качестве многомерного обобщения цепных дробей мы рассматриваем полиэдры Клейна.
\end{abstract}

\section{Введение}\label{sec:intro}

\paragraph{\bf Экспонента иррациональности.}

Пусть $\theta$ --- вещественное число. \emph{Экспонентой иррациональности} этого числа называется величина
\begin{equation}\label{eq:mu_def}
  \mu(\theta)=\sup\Big\{\gamma\in\R\ \Big|\,\big|\theta-p/q\big|\leq|q|^{-\gamma}\text{ имеет $\infty$ решений в }(q,p)\in\Z^2 \Big\}.
\end{equation}
Для рациональных чисел экспонента иррациональности, очевидно, равна $0$, для иррациональных она не меньше $1$ ввиду теоремы Дирихле о приближении вещественного числа рациональными. Если $\theta$ иррационально, оно раскладывается в бесконечную (обыкновенную) цепную дробь $\theta=[a_0;a_1,a_2,\ldots]$ и $\mu(\theta)$ можно следующим образом связать с ростом неполных частных:
\begin{equation} \label{eq:mu_vs_partial_quotients_equality}
  \mu(\theta)=2+\limsup_{n\to\infty}\frac{\log a_{n+1}}{\log q_n}\,.
\end{equation}
Здесь $q_0,q_1,q_2,\ldots$ --- последовательность знаменателей подходящих дробей $\theta$.

Многие утверждения о цепных дробях имеют многомерные аналоги. В данной работе мы исследуем с этой точки зрения соотношение \eqref{eq:mu_vs_partial_quotients_equality}, интерпретируем его геометрически и доказываем
трёхмерный аналог неравенства
\begin{equation} \label{eq:mu_vs_partial_quotients_inequality}
  \mu(\theta)\leq 2+\limsup_{n\to\infty}\frac{\log a_{n+1}}{\log q_n}\,.
\end{equation}

\paragraph{\bf Полигоны Клейна.}


Цепные дроби имеют весьма изящную геометрическую интерпретацию. Пусть $\theta_1$, $\theta_2$ --- различные вещественные числа. Рассмотрим линейные формы $L_1$, $L_2$ от двух переменных с коэффициентами, записанными в строчках матрицы
\begin{equation} \label{eq:A_theta}
  A=
  \begin{pmatrix}
    \theta_1 & -1 \\
    \theta_2 & -1
  \end{pmatrix}.
\end{equation}
Рассмотрим выпуклые оболочки
\[\cK_1=\conv\Big(\Big\{\vec z\in\Z^2\backslash\{\vec 0\}\ \Big|\,L_1(\vec z)\geq0,\ L_2(\vec z)\leq0 \Big\}\Big),\]
\[\cK_2=\conv\Big(\Big\{\vec z\in\Z^2\backslash\{\vec 0\}\ \Big|\,L_1(\vec z)\leq0,\ L_2(\vec z)\leq0 \Big\}\Big).\]
Эти выпуклые оболочки (см. рис.\ref{fig:KP_and_CF}), равно как и $-\cK_1$, $-\cK_2$, называются \emph{полигонами Клейна}.

\begin{figure}[h]
  \centering
  \begin{tikzpicture}[scale=1.4]
    \draw[very thin,color=gray,scale=1] (-3.8,-1.7) grid (4.8,4.56);

    \draw[color=black] plot[domain=-13/9:3.6] (\x, {11*\x/8}) node[right]{$y=\theta_1x$};
    \draw[color=black] plot[domain=-4:5] (\x, {-3*\x/8}) node[right]{$y=\theta_2x$};

    \fill[blue!10!,path fading=north]
        (4.8,4.56) -- (3+0.4,4+0.4*7/5) -- (3,4) -- (1,1) -- (1,0) -- (4.8,4.56) -- cycle;
    \fill[blue!10!,path fading=east]
        (4.8,4.56) -- (1,0) -- (3,-1) -- (3+1.8,-1-1.8*2/5) -- cycle;
    \fill[blue!10!,path fading=north]
        (2+1.17,3+1.17*4/3) -- (2,3) -- (0,1) -- (-2-1.8,3+1.17*4/3) -- cycle;
    \fill[blue!10!,path fading=west]
        (-2-1.8,3+1.17*4/3) -- (0,1) -- (-2,1) -- (-2-1.8,1+1.8/3) -- cycle;
    \fill[blue!10!,path fading=south]
        (-0.7,-1.7) -- (0,-1) -- (2,-1) -- (4.1,-1.7) -- cycle;
    \fill[blue!10!,path fading=west]
        (-3.8,-1.7) -- (-1,0) -- (-3,1) -- (-3.8,1+0.8*2/5) -- cycle;
    \fill[blue!10!,path fading=south]
        (-1-0.7*2/3,-1.7) -- (-1,-1) -- (-1,0) -- (-3.8,-1.7) -- cycle;

    \draw[color=blue] (3+0.4,4+0.4*7/5) -- (3,4) -- (1,1) -- (1,0) -- (3,-1) -- (3+1.8,-1-1.8*2/5);
    \draw[color=blue] (2+1.17,3+1.17*4/3) -- (2,3) -- (0,1) -- (-2,1) -- (-2-1.8,1+1.8/3);
    \draw[color=blue] (-0.7,-1.7) -- (0,-1) -- (2,-1) -- (4.1,-1.7);
    \draw[color=blue] (-1-0.7*2/3,-1.7) -- (-1,-1) -- (-1,0) -- (-3,1) -- (-3.8,1+0.8*2/5);

    \node[fill=blue,circle,inner sep=1.2pt] at (3,4) {};
    \node[fill=blue,circle,inner sep=1.2pt] at (1,1) {};
    \node[fill=blue,circle,inner sep=1.2pt] at (1,0) {};
    \node[fill=blue,circle,inner sep=1.2pt] at (3,-1) {};
    \node[fill=blue,circle,inner sep=1.2pt] at (2,3) {};
    \node[fill=blue,circle,inner sep=1.2pt] at (0,1) {};
    \node[fill=blue,circle,inner sep=1.2pt] at (-2,1) {};

    \node[fill=blue,circle,inner sep=1.2pt] at (-1,-1) {};
    \node[fill=blue,circle,inner sep=1.2pt] at (-1,0) {};
    \node[fill=blue,circle,inner sep=1.2pt] at (-3,1) {};
    \node[fill=blue,circle,inner sep=1.2pt] at (0,-1) {};
    \node[fill=blue,circle,inner sep=1.2pt] at (2,-1) {};

    \node[fill=blue,circle,inner sep=0.8pt] at (1,-1) {};
    \node[fill=blue,circle,inner sep=0.8pt] at (-1,1) {};
    \node[fill=blue,circle,inner sep=0.8pt] at (1,2) {};

    \node[right] at (1-0.03,0.5) {$a_0$};
    \node[right] at (2-2/13,2.5-3/13) {$a_2$};

    \node[right] at (2-0.06,-0.4) {$a_{-2}$};

    \node[above left] at (1.08,2-0.02) {$a_1$};

    \node[above] at (-1,0.95) {$a_{-1}$};

    \draw[blue] ([shift=({atan(-1/2)}:0.2)]1,0) arc (atan(-1/2):90:0.2);
    \draw[blue] ([shift=({atan(-2/5)}:0.2)]3,-1) arc (atan(-2/5):90+atan(2):0.2);
    \draw[blue] ([shift=(-90:0.2)]1,1) arc (-90:atan(3/2):0.2);
    \draw[blue] ([shift=({-90-atan(2/3)}:0.2)]3,4) arc (-90-atan(2/3):atan(7/5):0.2);
    \draw[blue] ([shift=({atan(4/3)}:0.2)]2,3) arc (atan(4/3):225:0.2);
    \draw[blue] ([shift=(45:0.2)]0,1) arc (45:180:0.2);
    \draw[blue] ([shift=(0:0.2)]-2,1) arc (0:90+atan(3):0.2);

    \node[right] at (3.15,-0.88) {$a_{-3}$};
    \node[right] at (1.17,0) {$a_{-1}$};
    \node[right] at (1.17,1) {$a_1$};
    \node[right] at (3.08,3.75) {$a_3$};
    \node[left] at (2-0.1,3.23) {$a_2$};
    \node[above] at (-0.05,1.18) {$a_0$};
    \node[above] at (-2.02,1.16) {$a_{-2}$};

    \draw (3,1.22) node[right]{$\cK_1$};
    \draw (-1,3.22) node[right]{$\cK_2$};
  \end{tikzpicture}
  \caption{Полигоны Клейна для}
          {$\theta_1=[a_0;a_1,a_2,\ldots],\ -1/\theta_2=[a_{-1};a_{-2},a_{-3},\ldots]$}
  \label{fig:KP_and_CF}
\end{figure}

Целочисленно--комбинаторная структура границ $\partial\cK_1$ и $\partial\cK_2$ тесно связана с цепными дробями чисел $\theta_1$ и $\theta_2$. Подробное описание этой связи можно найти, к примеру, в работах \cite{german_tlyustangelov_mjcnt}, \cite{korkina_2dim} (см. также книгу \cite{karpenkov_book}). Здесь же мы лишь укажем на следующий ключевой факт. Допустим, справедливо
\begin{equation}\label{eq:reduced_thetas}
  \theta_1>1,\quad
  -1<\theta_2<0.
\end{equation}
Тогда координаты вершин $\cK_1$ и $\cK_2$ равны (с точностью до знака) знаменателям и числителям подходящих дробей чисел $\theta_1$ и $\theta_2$, а целочисленные длины рёбер равны соответствующим неполным частным этих чисел. Напомним, что \emph{целочисленной длиной} целочисленного отрезка (т.е. отрезка, концы которого имеют целые координаты) называется количество пустых целочисленных отрезков, в нём содержащихся. Таким образом, каждому ребру полигона Клейна <<приписывается>> соответствующее ему неполное частное. Более того, неполные частные можно приписать и всем вершинам. Причина этого в том, что существует биекция (см. рис.\ref{fig:edge_vs_sprout}) между вершинами $\cK_1$ и рёбрами $\cK_2$, при которой вершина $\vec v$ соответствует ребру целочисленной длины
\begin{equation}\label{eq:alpha}
  \ang(\vec v)=|\det(\vec r_1,\vec r_2)|,
\end{equation}
где $\vec r_1$ и $\vec r_2$ суть примитивные целочисленные векторы, параллельные рёбрам, инцидентным вершине $\vec v$. На рис.\ref{fig:edge_vs_sprout} имеем $\vec r_1=\vec w-\vec v$, $\vec r_2=\vec u-\vec v$. Величину $\ang(\vec v)$ называют \emph{целочисленным углом} при вершине $\vec v$.

\begin{figure}[h]
  \centering
  \begin{tikzpicture}[scale=1.4]
    \draw[color=black] plot[domain=-3:3] (\x, {8*\x/11}) node[above right]{$y=\theta_1x$};
    \draw[color=black] plot[domain=-5:5] (\x, {-3*\x/7}) node[below]{$y=\theta_2x$};

    \fill[blue!10!,path fading=east]
        (4.6,1.8) -- (1,0) -- (4.6,-1.8) -- cycle;
    \fill[blue!10!,path fading=north]
        (2.8,1.8) -- (1,0) -- (4.6,1.8) -- cycle;
    \fill[blue!10!,path fading=west]
        (-4.6,-1.8) -- (-1,0) -- (-4.6,1.8) -- cycle;
    \fill[blue!10!,path fading=south]
        (-2.8,-1.8) -- (-1,0) -- (-4.6,-1.8) -- cycle;
    \fill[blue!10!,path fading=north]
        (-4.5,2) -- (-2,1) -- (1,1) -- (2.5,2) -- cycle;
    \fill[blue!10!,path fading=south]
        (4.5,-2) -- (2,-1) -- (-1,-1) -- (-2.5,-2) -- cycle;

    \draw[color=blue] (2.8,1.8) -- (1,0) -- (4.6,-1.8);
    \draw[color=blue] (-2.8,-1.8) -- (-1,0) -- (-4.6,1.8);
    \draw[color=blue] (-4.5,2) -- (-2,1) -- (1,1) -- (2.5,2);
    \draw[color=blue] (4.5,-2) -- (2,-1) -- (-1,-1) -- (-2.5,-2);

    \draw[very thick,color=blue] (-2,1) -- (1,1);
    \draw[very thick,color=blue] (1,0) -- (4,0);

    \draw[dashed,color=blue] (-3,1) -- (-2,1) -- (0,0) -- (1,1) -- (2,1) -- (4,0) -- (3,-1) -- (2,-1);
    \draw[dashed,color=blue] (0,0) -- (1,0);

    \node[fill=blue,circle,inner sep=1.2pt] at (-4,0) {};
    \node[fill=blue,circle,inner sep=1.2pt] at (-3,0) {};
    \node[fill=blue,circle,inner sep=1.2pt] at (-2,0) {};
    \node[fill=blue,circle,inner sep=1.2pt] at (-1,0) {};
    \node[fill=blue,circle,inner sep=1.2pt] at (0,0) {};
    \node[fill=blue,circle,inner sep=1.2pt] at (1,0) {};
    \node[fill=blue,circle,inner sep=1.2pt] at (2,0) {};
    \node[fill=blue,circle,inner sep=1.2pt] at (3,0) {};
    \node[fill=blue,circle,inner sep=1.2pt] at (4,0) {};

    \node[fill=blue,circle,inner sep=1.2pt] at (-3,1) {};
    \node[fill=blue,circle,inner sep=1.2pt] at (-2,1) {};
    \node[fill=blue,circle,inner sep=1.2pt] at (-1,1) {};
    \node[fill=blue,circle,inner sep=1.2pt] at (0,1) {};
    \node[fill=blue,circle,inner sep=1.2pt] at (1,1) {};
    \node[fill=blue,circle,inner sep=1.2pt] at (2,1) {};

    \node[fill=blue,circle,inner sep=1.2pt] at (-2,-1) {};
    \node[fill=blue,circle,inner sep=1.2pt] at (-1,-1) {};
    \node[fill=blue,circle,inner sep=1.2pt] at (0,-1) {};
    \node[fill=blue,circle,inner sep=1.2pt] at (1,-1) {};
    \node[fill=blue,circle,inner sep=1.2pt] at (2,-1) {};
    \node[fill=blue,circle,inner sep=1.2pt] at (3,-1) {};

    \draw (-0.02,0) node[above]{$\vec 0$};
    \draw (1.05,-0.05) node[above left]{$\vec v$};
    \draw (2.08,1.03) node[right]{$\vec u$};
    \draw (3.08,-1+0.03) node[right]{$\vec w$};
    \draw (4-0.05,-0.05) node[above right]{$\vec u+\vec w-\vec v$};
    \draw (-3+0.1,1.02) node[below left]{$-\vec w$};
    \draw (-2+0.2,1) node[above]{$\vec v-\vec w$};
    \draw (1-0.2,1) node[above]{$\vec u-\vec v$};
    \draw (2-0.2,-1) node[below]{$\vec w-\vec v$};

    \draw (4.1,1.2) node[above]{$\cK_1$};
    \draw (2,2) node[left]{$\cK_2$};

  \end{tikzpicture}
  \caption{Соответствие между рёбрами и вершинами} \label{fig:edge_vs_sprout}
\end{figure}

Таким образом, полигоны Клейна, оснащённые целочисленными длинами рёбер и целочисленными углами при вершинах, можно рассматривать как геометрическую интерпретацию цепных дробей. Отметим, что если условие \eqref{eq:reduced_thetas} не выполняется, указанное выше соответствие будет нарушаться лишь в некоторой окрестности начала координат. Стало быть, если обозначить через $\cV=\cV(\pm\cK_1,\pm\cK_2)$ множество вершин полигонов Клейна $\cK_1$, $-\cK_1$, $\cK_2$, $-\cK_2$, то \eqref{eq:mu_vs_partial_quotients_equality} можно переписать следующим образом:
\begin{equation} \label{eq:mu_vs_alpha}
  \max\big(\mu(\theta_1),\mu(\theta_2)\big)=
  2+\displaystyle\limsup_{\substack{ \vec v\in \cV,\ |\vec v|>1 \\ |\vec v|\to\infty }}\frac{\log(\ang(\vec v))}{\log|\vec v|}\,.
\end{equation}

\paragraph{Диофантова экспонента решётки.}

Для решёток в произвольной размерности определено понятие диофантовой экспоненты. Положим для каждого $\vec x=(x_1,\ldots,x_n)\in\R^n$
\begin{equation*} 
  \Pi(\vec x)=|x_1\cdot\ldots\cdot x_n|^{1/n}.
\end{equation*}

\begin{definition}
  Пусть $\La$ --- произвольная решётка в $\R^n$ полного ранга.
  \emph{Диофантовой экспонентой} этой решётки называется величина
  \begin{equation}\label{eq:omega_def}
    \omega(\La)=\sup\Big\{\gamma\in\R\ \Big|\,\exists\,\infty\,\vec x\in\La:\,\Pi(\vec x)\leq|\vec x|^{-\gamma} \Big\},
  \end{equation}
  где $|\cdot|$ означает sup-норму.
\end{definition}

Вернёмся к цепным дробям чисел $\theta_1$, $\theta_2$ и полигонам Клейна $\cK_1$, $\cK_2$. Пусть, как и прежде, $\cV$ --- множество вершин $\pm\cK_1$, $\pm\cK_2$. Пусть матрица $A$ задаётся \eqref{eq:A_theta}. Рассмотрим решётку
\begin{equation*} 
  \La=
  A\Z^2=
  \Big\{\big(L_1(\vec z),L_2(\vec z)\big)\,\Big|\,\vec z\in\Z^2 \Big\}.
\end{equation*}
%
Из соображений выпуклости точки множества $\cV$ являются локальными минимумами функции $L_1(\vec x)\cdot L_2(\vec x)$ (рассматриваемой на объединении полигонов $\pm\cK_1,\pm\cK_2$). Следовательно, в определении $\omega(\La)$ (см. \eqref{eq:omega_def}) множество всех точек решётки $\La$ можно заменить на множество $A(\cV)$:
\begin{equation} \label{eq:omega_along_V}
  \omega(\La)=\sup\Big\{\gamma\in\R\ \Big|\,\exists\,\infty\,\vec x\in A(\cV):\,\Pi(\vec x)\leq|\vec x|^{-\gamma} \Big\}.
\end{equation}
Далее, если $\vec v=(q,p)$ --- точка из $\cV$, то для $\vec w=A\vec v=\big(L_1(\vec v),L_2(\vec v)\big)$ имеем при достаточно большом $|q|$
\begin{multline*} 
  \Pi(\vec w)^2=
  |q\theta_1-p|\cdot|q\theta_2-p|\asymp \\ \asymp
  |q|\min\big(|q\theta_1-p|,|q\theta_2-p|\big)= \\ =
  q^2\min\big(|\theta_1-p/q|,|\theta_2-p/q|\big)
\end{multline*}
и
\begin{equation*} 
  |\vec w|\asymp|\vec v|\asymp|q|.
\end{equation*}
Стало быть, при достаточно большом $|q|$
\begin{equation}\label{eq:gamma_vs_2+2gamma}
  \Pi(\vec w)\asymp|\vec w|^{-\gamma}
  \iff
  \min\big(|\theta_1-p/q|,|\theta_2-p/q|\big)\asymp q^{-2-2\gamma}
\end{equation}
Эквивалентность \eqref{eq:gamma_vs_2+2gamma} ввиду \eqref{eq:omega_along_V} и \eqref{eq:mu_def} приводит к равенству
\begin{equation} \label{eq:omega_vs_mu}
  \max\big(\mu(\theta_1),\mu(\theta_2)\big)=2+2\omega(\La).
\end{equation}
Комбинируя \eqref{eq:mu_vs_alpha} и \eqref{eq:omega_vs_mu}, получаем геометрическую интерпретацию равенства \eqref{eq:mu_vs_partial_quotients_equality}:

  \begin{equation*}
    \omega(\La)=
    \frac12
    \displaystyle\limsup_{\substack{ \vec v\in \cV,\ |\vec v|>1 \\ |\vec v|\to\infty }}\frac{\log(\ang(\vec v))}{\log|\vec v|}\,.
  \end{equation*}

Соответственно, целью данной работы является обобщение на трёхмерный случай неравенства

\begin{equation} \label{eq:omega_vs_alpha_leq}
  \omega(\La)\leq
  \frac12
  \displaystyle\limsup_{\substack{ \vec v\in \cV,\ |\vec v|>1 \\ |\vec v|\to\infty }}\frac{\log(\ang(\vec v))}{\log|\vec v|}\,.
\end{equation}

\section{Формулировка основного результата}

Пусть $L_1$, $L_2$, $L_3$ --- линейно независимые линейные формы от трёх переменных.
%
Нулевые подпространства форм $L_1$, $L_2$, $L_3$ разбивают $\R^3$ на восемь симплициальных (замкнутых) конусов. В каждом из них рассмотрим выпуклую оболочку ненулевых целых точек. Обозначим их $\cK_i$, $i=1,\ldots,8$ (в произвольном порядке). Эти выпуклые оболочки называются \emph{полиэдрами Клейна} (соответствующими линейным формам $L_1$, $L_2$, $L_3$).

Если формы $L_1$, $L_2$, $L_3$ не обращаются в нуль в ненулевых целых точках, то, как показано в \cite{moussafir}, все $\cK_i$ суть обобщённые многогранники (т.е. их пересечения с любыми компактными многогранниками также являются многогранниками). В частности, в этом случае в каждой вершине полиэдра Клейна сходится конечное число рёбер. Стало быть, мы можем определить для рёберной звезды вершины полиэдра Клейна многомерный аналог целочисленного угла. Вообще говоря, такой многомерный аналог можно определять по-разному. Мы воспользуемся понятием \emph{определителя} рёберной звезды, введённым в работах \cite{german_2006}, \cite{german_2007}.


Пусть $\cK$ --- один из $\cK_i$ и пусть $\vec v$ --- какая-то его вершина. Пусть $\vec r_1,\ldots,\vec r_k$ --- примитивные целочисленные векторы, параллельные рёбрам $\cK$, инцидентным вершине $\vec v$. Обозначим через $\starv$ рёберную звезду вершины $\vec v$.

\begin{definition}\label{def:starv}
  \emph{Определителем} рёберной звезды $\starv$ называется величина
  \[
    \det\starv=\sum_{1\leq i_1<i_2<i_3\leq k}|\det(\vec r_{i_1},\vec r_{i_2},\vec r_{i_3})|.
  \]
\end{definition}

Нетрудно доказать, что определяемый таким образом $\det\starv$ равняется объёму суммы Минковского отрезков $[\vec 0,\vec r_1],\ldots,[\vec 0,\vec r_k]$. Ясно также, что аналогичная величина в двумерном случае в точности совпадает с целочисленным углом при вершине (см. \eqref{eq:alpha}).

Следующее утверждение являются основным результатом данной работы.

\begin{theorem}\label{t:omega_vs_det_starv}
  Рассмотрим решётку
  \begin{equation} \label{eq:La_3d}
    \La=
    \Big\{\big(L_1(\vec z),L_2(\vec z),L_3(\vec z)\big)\,\Big|\,\vec z\in\Z^3 \Big\}.
  \end{equation}
  Пусть 
  формы $L_1$, $L_2$, $L_3$ не обращаются в нуль в ненулевых целых точках. Пусть $\cK_1,\ldots,\cK_8$ --- полиэдры Клейна, соответствующие этим линейным формам, и пусть $\cV$ --- множество всех их вершин. Тогда
  \begin{equation} \label{eq:omega_vs_det_starv}
    \omega(\La)
    \leq
    \frac23
    \displaystyle\limsup_{\substack{ \vec v\in \cV,\ |\vec v|>1 \\ |\vec v|\to\infty }}\frac{\log(\det\starv)}{\log|\vec v|}\,.
  \end{equation}
\end{theorem}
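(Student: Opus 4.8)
The plan is to follow the scheme by which \eqref{eq:omega_vs_alpha_leq} is obtained in the plane, with $\ang(\vec v)$ replaced by $\det\starv$ and $|L_1(\vec v)L_2(\vec v)|$ by $|L_1(\vec v)L_2(\vec v)L_3(\vec v)|$. First I would record the three-dimensional analogue of the reduction \eqref{eq:omega_along_V}: the vertices of the $\cK_i$ are the local minima of $|L_1L_2L_3|$ on $\bigcup_i\partial\cK_i$ (a convexity argument identical to the planar one), so the supremum defining $\omega(\La)$ is attained along $A(\cV)$. Fixing $\gamma<\omega(\La)$, there are then infinitely many $\vec v\in\cV$ with $\Pi(A\vec v)\leq|A\vec v|^{-\gamma}$; since $|A\vec v|\asymp|\vec v|$ and $\Pi(A\vec v)^3=\prod_i|L_i(\vec v)|$, this reads
\[
  \prod_{i=1}^{3}|L_i(\vec v)|\ll|\vec v|^{-3\gamma}.
\]

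The theorem is then reduced to the pointwise estimate
\begin{equation}\label{eq:key}
  \det\starv\gg\Big(\prod_{i=1}^{3}|L_i(\vec v)|\Big)^{-1/2},
\end{equation}
with the constant depending only on the forms. Indeed, \eqref{eq:key} together with the preceding display gives $\det\starv\gg|\vec v|^{3\gamma/2}$ for infinitely many $\vec v$, whence $\limsup\frac{\log\det\starv}{\log|\vec v|}\geq\tfrac32\gamma$; letting $\gamma\to\omega(\La)$ and multiplying by $\tfrac23$ yields \eqref{eq:omega_vs_det_starv}. The exponent $\tfrac12$ is precisely the one that makes the bookkeeping close up (in $\R^{n}$ one expects the power $\tfrac1{n-1}$, recovering the planar $\ang(\vec v)\gg|L_1(\vec v)L_2(\vec v)|^{-1}$ when $n=2$).

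To prove \eqref{eq:key} I would argue locally at $\vec v$. The recession cone of $\cK$ is the simplicial cone $C$ containing it, so the direction $\vec v$ is feasible and $\vec v$ lies in the tangent cone $\sum_i\R_{\geq0}\vec r_i$; by Carathéodory one may choose three linearly independent edge vectors, say $\vec r_1,\vec r_2,\vec r_3$, with $\vec v=\mu_1\vec r_1+\mu_2\vec r_2+\mu_3\vec r_3$ and $\mu_t\geq0$. Writing $R=|\det(\vec r_1,\vec r_2,\vec r_3)|\leq\det\starv$, Cramer's rule gives $\mu_t=|\det(\dots,\vec v,\dots)|/R$ with each numerator a nonzero integer, so $\mu_t\geq1/R\geq1/\det\starv$. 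Using the identity $\det(\vec r_{i},\vec r_{j},\vec r_{k})=(\det A)^{-1}\det\big(A\vec r_{i},A\vec r_{j},A\vec r_{k}\big)$ to pass to the coordinates $L_j(\vec r_t)$, the target \eqref{eq:key} becomes the lower bound $\prod_{j}|L_j(\vec v)|\gg R^{-2}$ for $\vec v=\sum_t\mu_t\vec r_t$.

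The hard part is exactly this last lower bound. In the plane the analogous step is immediate, since there are only two edges and the signs of $L_j$ along them leave no room for cancellation; in space the sum $L_j(\vec v)=\sum_t\mu_t L_j(\vec r_t)$ can a priori be small through cancellation between the three edges, and ruling this out is the crux. I expect to control it from the convex position of the star: the point $\vec v-\vec r_t$ lies outside $\cK$, hence outside $C$ (every lattice point of $C$ belongs to $\cK$), so some coordinate satisfies $L_j(\vec r_t)>L_j(\vec v)>0$, while from the neighbour $\vec v+\ell_t\vec r_t\in C$ the remaining coordinates obey $L_j(\vec r_t)\geq-L_j(\vec v)$. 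Feeding these sign constraints into $\vec v=\sum_t\mu_t\vec r_t$, together with $\mu_t\geq1/R$ and the identity $|\det(A\vec r_1,A\vec r_2,A\vec r_3)|=\Delta R$ with $\Delta=|\det A|$, should pin $\prod_j|L_j(\vec v)|$ from below by a constant times $R^{-2}$ and complete \eqref{eq:key}. The genuinely three-dimensional input is the combinatorial fact that three suitably chosen edges already suffice and that their signs cannot conspire to make all three coordinates of $\vec v$ simultaneously small; establishing this cleanly is where I anticipate the main difficulty.
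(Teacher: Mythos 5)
Your reduction of Theorem \ref{t:omega_vs_det_starv} to the pointwise bound $\det\starv\geq c\,\big(\prod_j|L_j(\vec v)|\big)^{-1/2}$ is correct, and it coincides with the paper's own reduction: this bound is exactly Proposition \ref{prop:starw_geq_cPi} (i.e. \eqref{eq:starw_geq_cPi}, since $\Pi(\vec w)^3=\prod_j|L_j(\vec v)|$), and your bookkeeping with $\gamma<\omega(\La)$ reproduces the implication from \eqref{eq:Pi_leq_starw_plus_o} to \eqref{eq:omega_vs_det_starv}. But that inequality is the entire content of the paper --- its proof occupies all six steps of Section \ref{sec:proof} --- and your proposal does not prove it; you yourself flag the crux as open. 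So there is a genuine gap, and it sits precisely where the real work is.

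Concretely, the route you sketch breaks at two points. First, the Cramer numerators $\det(\ldots,\vec v,\ldots)$ need not be nonzero: $\vec v$ can lie in the cone spanned by only two (or one) of the edge directions, and this coplanar configuration is exactly the one the paper must handle separately, via the two-dimensional Klein polygon of $\Ga=\pi\cap\La_{\vec w}$ in the plane $\pi=\spanned(\vec u,\vec p_1)$ and the relation \eqref{eq:sprout_in_action}, in which $\vec p_1'$ need not even be an edge of the three-dimensional polyhedron. Second, and more seriously, the sign constraints you list ($L_j(\vec r_t)\geq-L_j(\vec v)$ for all $j$, and $L_j(\vec r_t)>L_j(\vec v)$ for some $j$) do not suffice to rule out cancellation. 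Writing $y_j=L_j(\vec v)$, $S=\mu_1+\mu_2+\mu_3$ and feeding these constraints into the expansion of the determinant, the best they yield is
\begin{equation*}
  y_1y_2y_3\;\geq\;\frac{R\,\mu_1\mu_2\mu_3}{6\,(1+S)^3}\;\geq\;\frac{1}{6\,R^{2}(1+S)^{3}}\,,
\end{equation*}
and nothing in your argument bounds $S$. In the plane this is automatic: the two numerators equal $1$ because the triangles $\vec 0,\vec v,\vec v+\vec r_i$ are empty and empty triangles are unimodular, whence $\mu_1=\mu_2=1/R$ and $S\leq2$ --- this is the real reason the planar step is ``immediate''. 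In $\R^3$ empty tetrahedra can have arbitrarily large determinant (Reeve's tetrahedra), so the numerators need not be $1$, the coefficients $\mu_t$ admit no easy upper bound, and this uncontrolled factor is exactly what your estimate cannot absorb. The paper's Steps 1--5 exist to circumvent this: after the hyperbolic normalization ($\vec u=D\vec w$ with all coordinates equal), one proves that $\vec u$ is the shortest lattice vector, that every edge outside the plane $\pi$ has length $\gg\Pi(\vec w)^{-1/2}$ (inequality \eqref{eq:long_edges}), and then, by the sandwich \eqref{eq:sandwich} and the analysis of the section $\cP=\cM\cap\cK_{\vec w}$, one extracts three edges $\vec p_{i_1},\vec p_{i_2},\vec p_{i_3}$ that are pairwise linearly independent with $\vec u$ and satisfy $|\det(\vec p_{i_1},\vec p_{i_2},\vec p_{i_3})|\asymp|\vec p_{i_1}||\vec p_{i_2}||\vec p_{i_3}|$; only for such a non-degenerate triple does the product-of-determinants trick of Step 6 close the argument. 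Without a substitute for this existence statement, your proof does not go through.
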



\section{Доказательство теоремы \ref{t:omega_vs_det_starv}}\label{sec:proof}

По аналогии с двумерным случаем обозначим через $A$ матрицу $3\times3$, в строках которой записаны коэффициенты линейных форм $L_1$, $L_2$, $L_3$. Не ограничивая общности, можно считать, что $\det A=1$. Тогда
\[
  \La=A\Z^3,\qquad\det\La=1.
\]
Положим
\[
  \cK_i'=A\cK_i.
\]
Тогда $\cK_1',\ldots,\cK_8'$ суть выпуклые оболочки ненулевых точек решётки $\La$ в каждом из $8$ ортантов. Они называются \emph{полиэдрами Клейна} решётки $\La$. Рассмотрим $\cW=A(\cV)$ --- множество всех их вершин. Как и в двумерном случае, воспользуемся тем, что точки множества $\cW$ являются локальными минимумами функции $\Pi(\vec x)$ (рассматриваемой на объединении полиэдров $\cK_1',\ldots,\cK_8'$). Тогда
\begin{equation} \label{eq:omega_as_limsup}
  \omega(\La)=
  \limsup_{\substack{\vec x\in\La,\ |\vec x|>1 \\ |\vec x|\to\infty}}\frac{\log\big(\Pi(\vec x)^{-1}\big)}{\log|\vec x|}=
  \limsup_{\substack{\vec w\in\cW,\ |\vec w|>1 \\ |\vec w|\to\infty}}\frac{\log\big(\Pi(\vec w)^{-1}\big)}{\log|\vec w|}\,.
\end{equation}
Каждой точке $\vec w\in\cW$ соответствует точка $\vec v\in\cV$, такая что $\vec w=A\vec v$. При таком соответствии $\starw=A(\starv)$,
то есть
\[
  \det\starw=\det\starv
  \qquad\text{ и }\qquad
  |\vec w|\asymp|\vec v|
  \text{ при }|\vec v|\to\infty.
\]
Таким образом, ввиду \eqref{eq:omega_as_limsup}, для доказательства \eqref{eq:omega_vs_det_starv} достаточно показать, что
\begin{equation} \label{eq:Pi_leq_starw_plus_o}
  \frac{\log\big(\Pi(\vec w)^{-1}\big)}{\log|\vec w|}
  \leq
  \frac23\cdot
  \frac{\log(\det\starw)}{\log|\vec w|}+o(1)
\end{equation}
при $\vec w\in\cW$, $|\vec w|\to\infty$. На самом деле мы докажем нечто большее. Справедливо следующее \emph{локальное} утверждение.

\begin{proposition} \label{prop:starw_geq_cPi}
  Существует такая положительная константа $c$,
  что для любой точки $\vec w\in\cW$ справедливо
  \begin{equation} \label{eq:starw_geq_cPi}
    \det\starw\geq c\Pi(\vec w)^{-3/2}.
  \end{equation}
\end{proposition}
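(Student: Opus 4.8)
The plan is to establish \eqref{eq:starw_geq_cPi} directly, working in the lattice picture where the coordinate functions play the role of $L_1,L_2,L_3$. Since the eight coordinate reflections permute the polyhedra $\cK_1',\ldots,\cK_8'$ and leave both $\Pi(\vec w)$ and $\det\starw$ unchanged, I may assume that $\vec w=(w_1,w_2,w_3)$ lies in the open positive orthant, so that $\Pi(\vec w)^3=w_1w_2w_3$ and the claim becomes $\det\starw\geq c\,(w_1w_2w_3)^{-1/2}$. The first, purely formal, reduction is that $\det\starw$ is a sum of non-negative terms and therefore dominates any single one: for any three edge vectors $\vec r_a,\vec r_b,\vec r_c$ issuing from $\vec w$ one has
\[
  \det\starw\geq\big|\det(\vec r_a,\vec r_b,\vec r_c)\big|,
\]
and, because $\det\La=1$, the right-hand side is exactly the lattice volume of the parallelepiped spanned by these three edges. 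Thus it suffices to produce one triple of edges at $\vec w$ whose spanned volume is at least $c\,(w_1w_2w_3)^{-1/2}$.

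To find such a triple I would use that $\vec w$ is a vertex of the sail, i.e. of $\conv\big(\La\cap(\text{orthant})\setminus\{\vec 0\}\big)$. Fix a facet $F\ni\vec w$ with supporting plane $\{\vec x:\langle\vec a,\vec x\rangle=1\}$; since the sail faces the origin, $\vec a$ has strictly positive coordinates and $\langle\vec a,\vec w\rangle=1$. The structural input is emptiness: no nonzero point of $\La$ lies strictly between this plane and the origin inside the orthant. Two consequences drive the estimate. First, applying the arithmetic–geometric mean inequality to $1=a_1w_1+a_2w_2+a_3w_3$ shows that $\vec w$ can have small coordinate-product only when it is far from the origin and close to a coordinate subspace. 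Second, and more importantly, emptiness forces the edges of the sail emanating from such a $\vec w$ to be \emph{long} in the directions parallel to the nearby coordinate subspace: one cannot return from $\vec w$ to a neighbouring vertex without travelling a controlled lattice distance transverse to $\vec a$.

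The quantitative heart of the argument is therefore to convert the largeness of the transverse edge lengths into the determinant bound. The natural tool is the geometry of numbers applied to the box $B=\{\vec x:|x_i|\leq w_i\}$ adapted to $\vec w$: its volume is $\asymp w_1w_2w_3$, so by Minkowski's theorem its successive minima $\lambda_1\leq\lambda_2\leq\lambda_3$ satisfy $\lambda_1\lambda_2\lambda_3\asymp(w_1w_2w_3)^{-1}$, with $\vec w$ itself accounting for a short vector. The plan is to match the edge directions at $\vec w$ with a reduced basis realising these minima and to read off a triple whose volume is of the required order $(w_1w_2w_3)^{-1/2}$, after which AM--GM distributes the exponent correctly among the three coordinates.

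I expect the main obstacle to be precisely this last conversion, and in particular the \emph{simultaneous-approximation} regime, where $\vec w$ approaches a one-dimensional edge of the orthant (two small coordinates, $w_3\asymp|\vec w|$). There the sail clings to a coordinate axis, and one must argue about the two-dimensional lattice seen transversally to that axis, showing both that genuinely long transverse edges occur at $\vec w$ and that the three chosen edges are linearly independent with a determinant no smaller than $(w_1w_2w_3)^{-1/2}$. Controlling the independence and the transverse length simultaneously — rather than merely bounding a volume, which emptiness alone does not settle because arbitrarily flat lattice-free simplices exist — is the delicate point, and it is where the specific value $3/2$ of the exponent, and hence the factor $2/3$ in \eqref{eq:omega_vs_det_starv}, is forced.
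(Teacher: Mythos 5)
Your opening reduction is sound and coincides with the paper's: since $\det\starw$ is a sum of nonnegative terms, it suffices to exhibit \emph{one} triple of edge vectors at $\vec w$ with $|\det(\vec r_a,\vec r_b,\vec r_c)|\geq c\,\Pi(\vec w)^{-3/2}$, and your adapted box $B=\{\vec x:|x_i|\leq w_i\}$ is equivalent to the paper's hyperbolic rotation $D=\textup{diag}\big(\Pi(\vec w)/w_1,\Pi(\vec w)/w_2,\Pi(\vec w)/w_3\big)$. But the central step is missing, and the tool you propose in its place provably cannot supply it. Minkowski's second theorem for $B$ gives $\lambda_1\lambda_2\lambda_3\,w_1w_2w_3\asymp1$, and any triple of lattice vectors $\vec v_1,\vec v_2,\vec v_3$ realizing the successive minima satisfies $|v_{i,j}|\leq\lambda_i w_j$, hence $|\det(\vec v_1,\vec v_2,\vec v_3)|\leq 3!\,\lambda_1\lambda_2\lambda_3\,w_1w_2w_3\asymp1$; combined with the lower bound $|\det|\geq\det\La=1$ for independent lattice vectors, a minima-realizing (or reduced) basis always has determinant $\asymp1$. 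The bound you need, $\Pi(\vec w)^{-3/2}$, tends to infinity as $\Pi(\vec w)\to0$, so no ``matching of edge directions with a reduced basis'' can yield it: the sail edges at $\vec w$ are precisely \emph{not} minima realizers — the whole point is that they are forced to be long (of normalized length $\gtrsim\Pi(\vec w)^{-1/2}$), and the exponent $3/2$ comes from that length, not from Minkowski's theorem.

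What closes this gap in the paper is a geometric argument for which your proposal has no counterpart, and which you yourself flag as ``the delicate point'' without resolving it. After normalizing so that $\vec u=D\vec w$ has all coordinates equal to $\Pi(\vec w)$, the paper shows: (i) every edge vector $\vec p_i$ with $\vec u,\vec p_1,\vec p_i$ independent satisfies $|\vec p_i|_2\geq|\vec u|_2^{-1/2}$, because $|\vec u|_2|\vec p_1|_2|\vec p_i|_2\geq|\det(\vec u,\vec p_1,\vec p_i)|\geq1$; (ii) the edges lying in the plane $\pi=\spanned(\vec u,\vec p_1)$ are controlled through the two-dimensional Klein polygon of $\Ga=\pi\cap\La_{\vec w}$ via the relation $\vec p_1+\vec p_1'=t\vec u$; and (iii) — the heart of the proof — cutting the sail by the plane $\cM=\{M_1(\vec x)=\tfrac92|\vec u|\}$ and sandwiching the section, $\cT\subset\cP\subset\conv(\cT_\e\cup\cI)$, forces the existence of \emph{three} edges $\vec p_{i_1},\vec p_{i_2},\vec p_{i_3}$ pointing into three well-separated regions ($\Delta_1$, $\Delta_2$, $\Omega_3$), so that any two of them are independent with $\vec u$ and $|\det(\vec p_{i_1},\vec p_{i_2},\vec p_{i_3})|\asymp|\vec p_{i_1}|\cdot|\vec p_{i_2}|\cdot|\vec p_{i_3}|$. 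Only with both ingredients does the chain $\Pi(\vec u)^3\det(\vec p_{i_1},\vec p_{i_2},\vec p_{i_3})^2\asymp|\vec u|^3\prod_j|\vec p_{i_j}|^2\geq|\det(\vec u,\vec p_{i_1},\vec p_{i_2})\det(\vec u,\vec p_{i_2},\vec p_{i_3})\det(\vec u,\vec p_{i_3},\vec p_{i_1})|\geq1$ produce the exponent $-3/2$. Your text contains the correct reduction and an accurate diagnosis of where the difficulty lies (including the observation that emptiness alone cannot bound the volume, since arbitrarily flat lattice-free simplices exist), but no mechanism for (iii); as it stands it is a plan rather than a proof.
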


\begin{proof}
  Положим $\e$ равным произвольному достаточно малому положительному числу, например,
  \[
    \e=2^{-100}.
  \]
  Если $\Pi(\vec w)\geq\e$, то при $c_1=\e^{3/2}$ справедливо $\det\starw\geq1\geq c_1\Pi(\vec w)^{-3/2}$. Будем далее считать, что
  \begin{equation}\label{eq:suppose_Pi_is_small}
    \Pi(\vec w)<\e.
  \end{equation}

  Разобьём наше рассуждение на несколько шагов.

  \paragraph{Шаг 1. Гиперболический поворот.}

  Пусть $\vec w=(w_1,\ldots,w_d)$. Рассмотрим диагональный оператор
  \[
    D=\textup{diag}\Big(\Pi(\vec w)\big/w_1,\ldots,\Pi(\vec w)\big/w_d\Big).
  \]
  Положим
  \[
    \vec u=D\vec w,
    \qquad
    \La_{\vec w}=D\La,
    \qquad
    \cK_{\vec w}=D\cK_i',
  \]
  где $\cK_i'$ --- тот полиэдр, вершиной которого является $\vec w$. Тогда $\cK_{\vec w}$ --- полиэдр Клейна решётки $\La_{\vec w}$, соответствующий положительному ортанту (то есть конусу, состоящему из точек с неотрицательными координатами), и
  \[
    \det\La_{\vec w}=\det\La=1,
    \qquad
    \det\staru=\det\starw,
  \]
  поскольку $|\det D|=1$. При этом все координаты точки $\vec u$ равны $\Pi(\vec w)$. В частности,
  \[
    |\vec u|=\Pi(\vec u)=\Pi(\vec w)<\e.
  \]

  \paragraph{Шаг 2. Короткие и длинные векторы.}

  Покажем, что вектор $\vec u$ является кратчайшим вектором решётки $\La_{\vec w}$ в sup-норме. Рассмотрим куб $\cB$, являющийся замкнутым шаром радиуса $|\vec u|$ в sup-норме. Это куб с центром в начале координат $\vec 0$ с ребром $2|\vec u|$. Поскольку $\vec u$ --- вершина $\cK_{\vec w}$, существует опорная к $\cK_{\vec w}$ гиперплоскость, пересекающаяся с $\cK_{\vec w}$ по точке $\vec u$. Эта гиперплоскость делит куб $\vec u+\cB$ на две симметричные относительно $\vec u$ части. В той части, которая содержит точку $\vec 0$, нет точек решётки $\La_{\vec w}$, отличных от $\vec 0$ и $\vec u$. Стало быть, во всём кубе $\vec u+\cB$ нет точек решётки, отличных от $\vec 0$, $\vec u$, $2\vec u$. Значит, и куб $\cB$ не содержит точек $\La_{\vec w}$, отличных от $\vec 0$, $\vec u$, $-\vec u$. Таким образом, $\vec u$, действительно, является кратчайшим вектором решётки $\La_{\vec w}$ в sup-норме.

  Пусть $\vec p_1,\ldots,\vec p_k$ --- примитивные векторы решётки $\La_{\vec w}$, параллельные рёбрам $\cK_{\vec w}$, инцидентным вершине $\vec u$. Эти векторы --- образы при действии оператора $DA$ векторов $\vec r_1,\ldots,\vec r_k$ из определения \ref{def:starv} и
  \[
    \det\staru=\sum_{1\leq i_1<i_2<i_3\leq k}|\det(\vec p_{i_1},\vec p_{i_2},\vec p_{i_3})|.
  \]
  Упорядочим $\vec p_1,\ldots,\vec p_k$ по возрастанию их евклидовой нормы, для которой будем использовать обозначение $|\cdot|_2$. Для любого $\vec p_i$, такого что $\vec u$, $\vec p_1$, $\vec p_i$ линейно независимы, справедливо $|\vec u|_2|\vec p_i|_2^2\geq|\vec u|_2|\vec p_1|_2|\vec p_i|_2\geq|\det(\vec u,\vec p_1,\vec p_i)|\geq\det\La_{\vec w}=1$, то есть $|\vec p_i|_2\geq|\vec u|_2^{-1/2}$, откуда
  \begin{equation}\label{eq:long_edges}
    |\vec p_i|\geq
    \frac{|\vec p_i|_2}{\sqrt3}\geq
    \frac{|\vec u|_2^{-1/2}}{\sqrt3}\geq
    \frac{\big(|\vec u|\sqrt3\big)^{-1/2}}{\sqrt3}=
    \frac{|\vec u|}{3^{3/4}|\vec u|^{3/2}}>
    \frac{|\vec u|}{3^{3/4}\e^{3/2}}>
    \frac{|\vec u|}{4\e}\,.
  \end{equation}

  \paragraph{Шаг 3. Полигон Клейна.}

  Рассмотрим двумерное подпространство $\pi$, порождённое векторами $\vec u$, $\vec p_1$ и решётку $\Ga=\pi\cap\La_{\vec w}$.
  Поскольку треугольник с вершинами $\vec 0$, $\vec u$, $\vec u+\vec p_1$ не содержит точек решётки $\La_{\vec w}$, отличных от вершин, векторы $\vec u$, $\vec p_1$ образуют базис решётки $\Ga$. Обозначим через $\cC$ пересечение $\pi$ с положительным ортантом и рассмотрим $\cK_\pi=\conv(\cC\cap\Ga\backslash\{\vec 0\})$ --- полигон Клейна решётки $\Ga$, соответствующий конусу $\cC$. Тогда $\vec p_1$ --- примитивный вектор решётки $\Ga$, параллельный одному из рёбер $\cK_\pi$, инцидентных вершине $\vec u$. Обозначим через $\vec p_1'$ примитивный вектор $\Ga$, параллельный второму из этих рёбер. Отметим, что $\vec p_1'$, вообще говоря, не обязан быть одним из $\vec p_2,\ldots,\vec p_k$. Векторы $\vec u$, $\vec p_1'$ также образуют базис решётки $\Ga$. Стало быть, точки $\vec p_1$ и $\vec p_1'$ находятся на одинаковом расстоянии от прямой, порождённой $\vec u$. Следовательно,
  \begin{equation}\label{eq:sprout_in_action}
    \vec p_1+\vec p_1'=t\vec u,\qquad t\in\N
  \end{equation}
  (ср. с рис. \ref{fig:edge_vs_sprout}). Таким образом, если положить для $\vec x=(x_1,x_2,x_3)$
  \[
    M(\vec x)=x_1+x_2+x_3,
  \]
  то ввиду \eqref{eq:sprout_in_action} получим $M(\vec p_1+\vec p_1')\geq3|\vec u|$, откуда либо $M(\vec u+\vec p_1)\geq\frac92|\vec u|$, либо $M(\vec u+\vec p_1')\geq\frac92|\vec u|$.

  Учитывая \eqref{eq:long_edges}, заключаем, что среди точек $\vec u+\vec p_i$, $i=1,\ldots,k$, неравенству
  \begin{equation}\label{eq:almost_empty_zone}
    M(\vec x)<\frac92|\vec u|
  \end{equation}
  удовлетворяет не более чем одна, причём это или $\vec u+\vec p_1$, или $\vec u+\vec p_2$ (последнее возможно лишь тогда, когда $\vec p_1'=\vec p_2$).

  \paragraph{Шаг 4. Сечение и выпуклая оболочка.}

  Положим
  \[
    M_1(\vec x)=x_1+x_2+\frac94x_3,\qquad
    M_2(\vec x)=x_1+\frac94x_2+x_3,\qquad
    M_3(\vec x)=\frac94x_1+x_2+x_3.
  \]
  Для любой точки $\vec x=(x_1,x_2,x_3)$, удовлетворяющей неравенствам
  \begin{equation}\label{eq:hexahedron}
    x_1,x_2,x_3\geq0,\qquad
    M_1(\vec x),M_2(\vec x),M_3(\vec x)\leq\frac92|\vec u|,
  \end{equation}
  справедливо $x_1,x_2,x_3\leq2|\vec u|$. Но, как мы заметили выше, в кубе $\vec u+\cB$ нет точек решётки, отличных от $\vec 0$, $\vec u$, $2\vec u$. Стало быть, никакая из точек $\vec u+\vec p_i$, $i=1,\ldots,k$, не удовлетворяет \eqref{eq:hexahedron}. Таким образом, если среди этих точек и существует такая, которая удовлетворяет \eqref{eq:almost_empty_zone}, она будет удовлетворять хотя бы одному из неравенств
  \[
    M_1(\vec x)>\frac92|\vec u|,\qquad
    M_2(\vec x)>\frac92|\vec u|,\qquad
    M_3(\vec x)>\frac92|\vec u|.
  \]
  Не ограничивая общности, можно считать, что она удовлетворяет первому из них. Тогда и для каждого $i=1,\ldots,k$ справедливо
  \[
    M_1(\vec u+\vec p_i)\geq\frac92|\vec u|,
  \]
  поскольку при неотрицательных $x_1$, $x_2$, $x_3$ справедливо $M_1(\vec x)\geq M(\vec x)$.

  Обозначим через $\cO$ положительный ортант --- конус, состоящий из точек с неотрицательными координатами. Рассмотрим плоскость
  \[
    \cM=\Big\{\vec x\in\R^3\,\Big|\,M_1(\vec x)=\frac92|\vec u| \Big\}
  \]
  и сечения
  \[
    \cS=\cM\cap\cO,\qquad
    \cT=\cM\cap(\vec u+\cO),\qquad
    \cP=\cM\cap\cK_{\vec w}.
  \]
  Тогда $\cT\subset\cP\subset\cS$, причём $\cT$ непусто, ибо
  $
    M_1(\vec u)=\frac{17}4|\vec u|<\frac92|\vec u|.
  $
  Более того, $\cT$ является треугольником с вершинами в точках
  \[
    \Big(\frac54|\vec u|,|\vec u|,|\vec u|\Big),\qquad
    \Big(|\vec u|,\frac54|\vec u|,|\vec u|\Big),\qquad
    \Big(|\vec u|,|\vec u|,\frac{10}9|\vec u|\Big).
  \]
  Множество $\cS$ является треугольником с вершинами в точках
  \[
    \Big(\frac92|\vec u|,0,0\Big),\qquad
    \Big(0,\frac92|\vec u|,0\Big),\qquad
    \Big(0,0,2|\vec u|\Big).
  \]
  Множество $\cP$ является многоугольником с вершинами в точках пересечения отрезков $[\vec u,\vec u+\vec p_i]$, $i=1,\ldots,k$, с плоскостью $\cM$.
  При этом из \eqref{eq:long_edges} следует, что вершины $\cP$, соответствующие $\vec p_i$, не лежащим в подпространстве $\pi$, то есть линейно независимым с $\vec u$ и $\vec p_1$, содержатся в
  \[
    \cT_\e=\cM\cap\big((1-\e)\vec u+\cO\big).
  \]
  Вершины же, соответствующие $\vec p_i$, лежащим в $\pi$, содержатся в отрезке
  \[
    \cI=\cS\cap\pi.
  \]
  Таким образом,
  \begin{equation}\label{eq:sandwich}
    \cT\subset\cP\subset\conv(\cT_\e\cup\cI). 
  \end{equation}

  \paragraph{Шаг 5. Анализ сечения.}

  Обозначим через $\vec a$ точку пересечения плоскости $\cM$ с прямой, порождённой вектором $\vec u$. Тогда координаты точки $\vec a$ равны, эта точка принадлежит отрезку $\cI$ и является центром гомотетии треугольников $\cT$, $\cT_\e$, $\cS$. Выделим в $\cS$ три зоны (см. рис. \ref{fig:the_section}):
  \begin{align*}
    \Omega_1 & =\Big\{\vec x=(x_1,x_2,x_3)\in\cS\backslash\cT\,\Big|\,x_2<\frac{x_1+x_3}2,\ x_3\leq\frac{x_1+x_2}2 \Big\}, \\ \vphantom{\bigg|}
    \Omega_2 & =\Big\{\vec x=(x_1,x_2,x_3)\in\cS\backslash\cT\,\Big|\,x_3<\frac{x_1+x_2}2,\ x_1\leq\frac{x_2+x_3}2 \Big\}, \\
    \Omega_3 & =\Big\{\vec x=(x_1,x_2,x_3)\in\cS\backslash\cT\,\Big|\,x_1<\frac{x_2+x_3}2,\ x_2\leq\frac{x_1+x_3}2 \Big\}.
  \end{align*}
  Отрезок $\cI$ имеет непустое пересечение ровно с одним из множеств $\Omega_1$, $\Omega_2$, $\Omega_3$. На рис. \ref{fig:the_section} изображён случай пересечения с $\Omega_3$. Разберём подробно этот случай. Остальные два разбираются аналогично.

  Обозначим через $\vec b$ конец отрезка $\cI$, не принадлежащий $\Omega_3$. Проведём из точки $\vec b$ прямые через нижние вершины треугольника $\cT$. Эти прямые отсекают от $\cT_\e$ треугольники $\Delta_1$ и $\Delta_2$. Каждый из них в силу \eqref{eq:sandwich} обязан содержать хотя бы одну вершину $\cP$. Пусть $\Delta_1$ содержит вершину, соответствующую $\vec p_{i_1}$, и $\Delta_2$ --- вершину, соответствующую $\vec p_{i_2}$.
  Покажем, что $\Omega_3$ также содержит вершину $\cP$. Если это не так, то в соотношении \eqref{eq:sandwich} отрезок $\cI$ можно заменить отрезком $[\vec a,\vec b]$:
  \[
    \cT\subset\cP\subset\conv(\cT_\e\cup[\vec a,\vec b]).
  \]
  Тогда из предположения, что $\Omega_3$ не содержит вершин $\cP$, следует, что верхняя вершина $\cT$ не принадлежит $\cP$, что противоречит включению $\cT\subset\cP$. Стало быть, в $\Omega_3$ найдётся вершина $\cP$. Пусть она соответствует $\vec p_{i_3}$.


\begin{figure}[h]
  \centering
  \begin{tikzpicture}[scale=0.47]
    \fill[blue!10!]
      (0,8/5) -- (-4/5*8/9,4/5) -- (-4*8/9,4) -- (0,8) -- (4*8/9,4) -- (4/5*8/9,4/5) -- cycle;
    \fill[blue!10!]
      (-8/5*2,-2) -- (-2*8/9,-2) -- (-10*8/9,-10) -- (-16,-10) -- (-16+10*8/9,0) -- (-2*4/5*8/9,0) -- cycle;
    \fill[blue!10!]
      (8/5*2,-2) -- (2*8/9,-2) -- (10*8/9,-10) -- (16,-10) -- (16-10*8/9,0) -- (2*4/5*8/9,0) -- cycle;

    \draw[color=gray,very thin] (-16,-10) -- (0,0) -- (16,-10);
    \draw[color=gray,very thin] (0,8) -- (0,0);

    \draw[color=gray] (-16,-10) -- (0,8) -- (16,-10) -- cycle;

    \draw[color=blue] (-8/5*2,-2) -- (0,8/5) -- (8/5*2,-2) -- cycle;

    \draw[color=blue,very thin] (-4,-2.5) -- (0,2) -- (4,-2.5) -- cycle;

    \draw[color=blue,very thin,dashed] (0,0) -- (4/5*8/9,4/5);
    \draw[color=blue,very thin] (4*8/9,4) -- (4/5*8/9,4/5);
    \draw[color=blue,very thin,dashed] (0,0) -- (-10*8/9,-10);

    \draw[color=blue,very thin,dashed] (0,0) -- (-4*8/9,4);
    \draw[color=blue,very thin,dashed] (0,0) -- (2*8/9,-2);
    \draw[color=blue,very thin] (10*8/9,-10) -- (2*8/9,-2);

    \draw[color=blue,very thin] (-64/9,0) -- (-8/5*8/9,0);
    \draw[color=blue,very thin,dashed] (-8/5*8/9,0) -- (64/9,0);

    \draw[color=blue] (-5,-10) -- (64/25,128/25);


    \draw[color=gray,very thin] (-5,-10) -- (8/5*2,-2);
    \fill[blue!30!]
      (4,-2.5) -- (215/80,-2.5) -- (2336/689,-1250/689) -- cycle;
    \draw[color=blue,thick] (4,-2.5) -- (215/80,-2.5) -- (2336/689,-1250/689) -- cycle;

    \draw[color=gray,very thin] (-5,-10) -- (-8/5*2,-2);
    \fill[blue!30!]
      (-4,-2.5) -- (-53/16,-2.5) -- (-736/239,-350/239) -- cycle;
    \draw[color=blue,thick] (-4,-2.5) -- (-53/16,-2.5) -- (-736/239,-350/239) -- cycle;

    \draw (-10,-8) node[above]{$\Omega_1$};
    \draw (10,-8) node[above]{$\Omega_2$};
    \draw (0,3.5) node[above]{$\Omega_3$};

    \node[fill=blue,circle,inner sep=1pt] at (0,0) {};
    \node[fill=blue,circle,inner sep=1pt] at (-5,-10) {};
    \draw (0.06,0) node[below]{$\vec a$};
    \draw (-5,-10) node[below]{$\vec b$};

    \draw[->,>=stealth',color=black,very thin] (12,-2.3) node[right]{$\Delta_2$} -- (3.43,-2.3);
    \draw[->,>=stealth',color=black,very thin] (-12,-2.3) node[left]{$\Delta_1$} -- (-3.45,-2.3);

    \draw[->,>=stealth',color=black,very thin] (10.75,-0.9) node[right]{$\cT$} -- (1.85,-0.9);
    \draw[->,>=stealth',color=black,very thin] (9.5,0.5) node[right]{$\cT_\e$} -- (1.15,0.5);

    \draw (-2.5,-5) node[right]{$\cI$};

%
%
%

  \end{tikzpicture}
  \caption{Сечение плоскостью $\cM$} \label{fig:the_section}
\end{figure}

  \paragraph{Шаг 6. Непосредственная оценка.}

  Поскольку $\e$ мало, найденные векторы $\vec p_{i_1}$, $\vec p_{i_2}$, $\vec p_{i_3}$ удовлетворяют следующим двум условиям:

  1) любые два из них линейно независимы с вектором $\vec u$;

  2) $|\det(\vec p_{i_1},\vec p_{i_2},\vec p_{i_3})|\asymp|\vec p_{i_1}|\cdot|\vec p_{i_2}|\cdot|\vec p_{i_3}|$.
  \\
  Причём константы, подразумеваемые знаком ,,$\asymp$``, абсолютны. Благодаря этим свойствам получаем:
  \begin{multline*}
    \Pi(\vec u)^3\det(\vec p_{i_1},\vec p_{i_2},\vec p_{i_3})^2\asymp
    |\vec u|^3|\vec p_{i_1}|^2|\vec p_{i_2}|^2|\vec p_{i_2}|^2
    \geq \\ \geq
    |\det(\vec u,\vec p_{i_1},\vec p_{i_2})\cdot
    \det(\vec u,\vec p_{i_2},\vec p_{i_3})\cdot
    \det(\vec u,\vec p_{i_3},\vec p_{i_1})|\geq1.
  \end{multline*}
  Таким образом, существует константа $c_2>0$, такая что
  \[
    \det\starw=\det\staru\geq|\det(\vec p_{i_1},\vec p_{i_2},\vec p_{i_3})|\geq c_2\Pi(\vec u)^{-3/2}=c_2\Pi(\vec w)^{-3/2}.
  \]
  Учитывая, что это неравенство получено в предположении \eqref{eq:suppose_Pi_is_small}, остаётся положить $c=\min(c_1,c_2)$.
\end{proof}

Из предложения \ref{prop:starw_geq_cPi} следует \eqref{eq:Pi_leq_starw_plus_o} и, стало быть, и \eqref{eq:omega_vs_det_starv}. Теорема \ref{t:omega_vs_det_starv} доказана.

\section{Об обращении предложения \ref{prop:starw_geq_cPi}}\label{sec:counterexample}

Опишем пример, показывающий, что буквальное обращение предложения \ref{prop:starw_geq_cPi} невозможно. Возьмём произвольное целое $n\geq3$ и положим
\[
  \vec e_1=(n,1,0),\qquad
  \vec e_2=(0,n,1),\qquad
  \vec e_3=(1,0,n),\qquad
  \vec v=(1,1,1).
\]
Рассмотрим конус $\cC$ с вершиной в начале координат и рёбрами, порождёнными векторами $\vec e_1$, $\vec e_2$, $\vec e_3$. Полиэдр Клейна $\cK=\conv(\cC\cap\Z^3\backslash\{\vec 0\})$ имеет три ограниченные грани и три неограниченные. Ограниченные грани суть треугольники $\vec v\vec e_1\vec e_2$, $\vec v\vec e_2\vec e_3$, $\vec v\vec e_3\vec e_1$. Действительно, тетраэдр $\vec 0\vec v\vec e_1\vec e_2$ пуст (в том смысле, что он не содержит целых точек, отличных от вершин), ибо он <<зажат>> между плоскостями $x_3=0$ и $x_3=1$, в то время как векторы $\vec e_1-\vec 0$ и $\vec e_2-\vec v$ примитивны. Аналогично, пусты тетраэдры $\vec 0\vec v\vec e_2\vec e_3$ и $\vec 0\vec v\vec e_3\vec e_1$. Стало быть, в вершине $\vec v$ сходятся три грани --- $\vec v\vec e_1\vec e_2$, $\vec v\vec e_2\vec e_3$ и $\vec v\vec e_3\vec e_1$. Определитель рёберной звезды $\starv$ равен
\[
  \det\starv=
  |\det(\vec e_1-\vec v,\vec e_2-\vec v,\vec e_3-\vec v)|=
  \left|
  \begin{matrix}
    n-1 & 1 & 0 \\
    0 & n-1 & 1 \\
    1 & 0 & n-1
  \end{matrix}
  \right|=
  (n-1)^3-1.
\]
Определим линейные формы $L_1$, $L_2$, $L_3$ строчками матрицы
\[
  A=
  (n^3+1)^{-2/3}
  \left(
  \begin{matrix}
    n^2 & 1 & -n \\
    -n & n^2 & 1 \\
    1 & -n & n^2
  \end{matrix}
  \right).
\]
Тогда $L_i(\vec e_j)=0$ при $i\neq j$ и $\det A=1$. Положим
\[
  \La=A\Z^3,\qquad\cK'=A\cK,\qquad\vec w=A\vec v.
\]
Тогда $\vec w$ --- вершина $\cK'$,
\begin{equation}\label{eq:Pi_is_large}
  \Pi(\vec w)=L_1(\vec v)L_2(\vec v)L_3(\vec v)=\frac{(n^2-n+1)^3}{(n^3+1)^2}\asymp1\quad\text{при}\quad n\to\infty
\end{equation}
и
\begin{equation}\label{eq:det_is_huge}
  \det\starw=
  \det\starv=
  (n-1)^3-1\asymp n^3\quad\text{при}\quad n\to\infty.
\end{equation}
Решётку $\La$, разумеется, можно немного <<подправить>>, чтобы она не имела ненулевых точек на координатных плоскостях. Например, для малого положительного $\e$ можно взять вместо $A$ матрицу
\[
  A_\e=
  A+
  \left(
  \begin{matrix}
    0 & 0 & \e \\
    \e & 0 & 0 \\
    0 & \e & 0
  \end{matrix}
  \right).
\]
Тогда при $\e$ достаточно малом точка $\vec v$ будет оставаться вершиной соответствующего полиэдра Клейна, треугольники $\vec v\vec e_1\vec e_2$, $\vec v\vec e_2\vec e_3$, $\vec v\vec e_3\vec e_1$ будут оставаться его гранями и, стало быть, будет сохраняться рёберная звезда $\starv$. При этом ввиду \eqref{eq:Pi_is_large} и \eqref{eq:det_is_huge} $\Pi(\vec w)$ будет отделено от нуля, в то время как $\det\starw$ может быть сколь угодно большим.

Итак, буквальное обращение предложения \ref{prop:starw_geq_cPi} невозможно. Тем не менее, уместно отметить, что в \cite{german_2007} доказано, что $\Pi(\vec x)$ отделено от нуля на ненулевых точках решётки $\La$ тогда и только тогда, когда ограничены определители рёберных звёзд и граней полиэдра Клейна решётки $\La$, соответствующего положительному ортанту. Чтобы получить такой результат, необходимо рассматривать вершины как самого полиэдра $\cK$, так и смежных с ним --- соответствующих другим ортантам. Так мы приходим к двум вопросам, ответы на которые позволили бы обратить неравенство \eqref{eq:omega_vs_det_starv} (изменив, возможно, константу).

\emph{Вопрос 1:} Верно ли, что если $\det\starv$ велик, то <<вблизи>> $\vec v$ найдётся вершина $\vec v'$ одного из $8$ полиэдров Клейна решётки $\La$ с малым $\Pi(\vec v')$?

\emph{Вопрос 2:} Существует ли отличный от $\det\starv$ целочисленный аффинный инвариант рёберной звезды, для которого справедлив <<обратимый>> аналог предложения \ref{prop:starw_geq_cPi}?

\paragraph{Благодарности.} Второй автор является победителем конкурса <<Junior Leader>> Фонда развития теоретической физики и математики <<БАЗИС>> и хотел бы поблагодарить жюри и спонсоров конкурса.

\end{document}